\newcommand{\End}{\operatorname{End}}
\newcommand{\Id}{\operatorname{Id}}
\newcommand{\tr}{\operatorname{tr}}
\newcommand{\Z}{\mathbb{Z}}
\newcommand{\C}{\mathbb{C}}
\newcommand{\N}{\mathbb{N}}
\newcommand{\sll}{\mathfrak{sl}}
\newcommand{\slt}{\sll(2)}
\newcommand{\g}{sl_2}
\newcommand{\halfroot}{\ensuremath{\epsilon}}
\newcommand{\Usplit}{\ensuremath{\stackrel{\circ}{U}_\epsilon(sl_2)}}
\newcommand{\Usplitg}{\ensuremath{\stackrel{\circ}{U}_\epsilon(\mathfrak{g})}}
\newcommand{\cat}{{\mathcal{C}_\epsilon}}
\newcommand{\ep}{\epsilon}
\newcommand{\RR}{\mathcal{R}}
\newtheorem{definition}{Definition}
\newtheorem{theorem}[definition]{Theorem}
\newtheorem{proposition}[definition]{Proposition}
\newtheorem{lemma}[definition]{Lemma}
\newtheorem{remark}[definition]{Remark}
\newcommand{\epsh}[2]
         {\begin{array}{c} \hspace{-1.0mm}
        \raisebox{-4pt}{\epsfig{figure=#1.eps,height=#2}}
        \hspace{-1.0mm}
        \end{array}}
\newcommand{\e}{\epsilon}
\newcommand{\io}{\iota}
\begin{document}
\let\co=\comment \let\endco=\endcomment
%\let\co=\verbatim \let\endco=\endverbatim

%On the first version of this paper I make comments labeled by %N1:
%In the next version I will write %N2:
%N1:  Something like (feel free to change, as this is not a great title):
\title[]{On invariants of graphs related to quantum $\slt$ at roots of unity}
\author{Nathan Geer}
\address{Max-Plank Institute f\"{u}r Mathematik\\
Vivatgasse 7\\
53111 Bonn, Germany\\
and\\
School of Mathematics\\
  Georgia Institute of Technology\\
  Atlanta, GA 30332-0160, USA} \email{geer@math.gatech.edu}
    \author{Nicolai Reshetikhin}
\address{Department of Mathematics\\
University of California, Berkeley\\
970 Evans Hall \#3840\\
Berkeley, CA 94720-3840, USA\\
and\\
KDV Institute for Mathematics\\
Universiteit van Amsterdam, \\
Plantage Muidergracht 24\\
1018 TV, Amsterdam, The Netherlands}
\email{reshetik@math.berkeley.edu}
  \date{\today}

\begin{abstract}
We show how to define invariants of graphs related to quantum
$sl_2$ when the graph has more then
one connected component and components are colored by
blocks of representations with zero quantum dimensions.
\end{abstract}

\maketitle
\setcounter{tocdepth}{1}
% \tableofcontents

\section*{Introduction}
%%%%%%%%%%%%%%%%%%%%%%%%%%%%%%%%%%%%%%%%%%%%%%%%%%%%%%%%%%%%%%%%%%%%%%%%%%%%%%
%%%%%%%%%%%%%%%%%%%%%%%%%%%%%%%%%%%%%%%%%%%%%%%%%%%%%%%%%%%%%%%%%%%%%%%%%%%%%%
\begin{co}
I have used the environment {co} to automatically
show/hide comments written in this block form.
To hind such comments just change the lines
\let\co=\comment \let\endco=\endcomment
\end{co}

The invariants of oriented linked framed graphs colored by
finite dimensional representations of quantized universal enveloping algebras were constructed in \cite{RT} for non-zero quantum dimensions.  In this construction the edges of the graph are colored by finite dimensional representations. At each vertex
a total ordering of adjacent edges which agrees with counter-clock wise cyclic order defined by the framing is fixed. A vertex
with adjacent edges colored by representations $V_1, \dots, V_n$ (according to the total ordering) is colored by an element
from $Hom(\C,V_1^{\e_1}\otimes...\otimes V_n^{\e_n})$ of all $U_q(\g)$-
invariant vectors in the tensor product. Here $V^+=V, V^-=V^*$
where $V^*$ is the left dual to $V$, $\e=+$ if the edges is incoming and $\e=-$ if the edges is outgoing.

When the quantum dimension is zero, it is easy to define such invariants for knots when the center of
$U_q(\g)$ acts by scalars on the representation coloring the
knot. One should cut the knot (its diagram) at any place and
then to compute the invariant of the corresponding $(1,1)$
tangle. Because the center acts trivially at the coloring representation, this invariant will be equal to a operator of multiplication by a scalar. It is easy to show that the
scalar is the invariant of a knot (does not depend on where the
knot was cut). But this technique does not work for
links.

Recently invariants of links colored by such representations
were introduced in \cite{GPT} under the assumption that
these representations satisfy an additional property,
i.e. they are ambidextrous (see \cite{GPT} or section \ref{S:ambi} for
the precise definition). We will abbreviate this name here to ambi-modules.

The goal of this note is to present in details
how the construction involving ambi-modules works for $sl_2$.
We prove that all generic irreducible representations of $\Usplit$ are ambi-modules.

We also conjecture but we do not prove it here that
all generic irreducible representations of the $\Usplitg$
are ambi-modules  for any simple Lie algebra.

We would like to thank V. Turaev and B. Patureau for discussions.
This work was supported by the Danish National
Research Foundation through the
Niels Bohr initiative, the work of N.R. was also
supported by the NSF grant DMS-0601912. The work of N.G.
was supported by the NSF grant DMS-0706725. Both authors are grateful to the Aarhus University for the
hospitality.

\section{The split form of quantum $\g$ at a root of unity}

\subsection{Quantum $\g$}

Let $m,l$ be positive odd integers, $t=\exp(\frac{i\pi m}{2l})$, and $\epsilon=t^2$.

The algebra $U_\halfroot(\g)$ is the unital
$\C$-algebra generated by invertible $K$ and by $E, F$
with defining relations:
\begin{equation*}
  KE=\halfroot^2EK,  KF=\halfroot^{-2}FK,
  [E,F]=\frac{K-K^{-1}}{\halfroot-\halfroot^{-1}}.
\end{equation*}

Recall that it is a Hopf algebra with the comultiplication acting on generators as
\[
\Delta K=K\otimes K, \Delta E=E\otimes K+1\otimes E,
\Delta F=F\otimes 1+K^{-1}\otimes F
\]

The elements $K^l, F^l, E^l$ generate a central Hopf subalgebra,
and $E^l, F^l$ generate central Hopf ideal.
Denote by $U_\halfroot(\g)'$ its quotient by this central ideal.

\subsection{The split form of $U_\halfroot(\g)'$}

\begin{definition}
The split form of $U_\halfroot(\g)'$ is the algebra $\Usplit$ generated by $P_i, i\in \Z$ and by $E,F,x,x^{-1}$ such that $x$ is central and
 \begin{equation*}
   P_iP_j=\delta_{ij}P_i, P_iE=EP_{i-2},  P_iF=FP_{i+2},  \sum_{i\in \Z} P_i = 1,
 \end{equation*}
 \begin{equation*}
 EF-FE=\sum_{j\in \Z}\frac{x^2\e^j-x^{-2}\e^{-j}}{\halfroot - \halfroot^{-1}}P_j,
\end{equation*}

\end{definition}

The algebra $\Usplit$ is a Hopf algebra (in the category of its finite-dimensional representations) where the comultiplication $\Delta$, counit $\varepsilon$ and antipode $S$ morphisms are given by:
\begin{align*}
 \Delta(E)= & \sum_{j\in \Z}E\otimes x^2\e^jP_j + 1\otimes E, &  \Delta(F)= & F\otimes 1 + \sum_{j\in \Z}x^{-2}\e^{-j}P_j\otimes F,\\
 \Delta(P_k)=& \sum_{i,j\in\Z ;\; i+j=k}P_i\otimes P_j, & \Delta(x)=&x\otimes x,
\end{align*}
\begin{align*}
  \varepsilon(E)=&\varepsilon(F)=0, & \varepsilon(P_i)=& \delta_{i,0}, & \varepsilon(x)=&1,
\end{align*}
\begin{align*}
 S(E)=& -\sum_{j\in \Z}x^{-1}\e^{-j}P_j,& S(F)=& -\sum_{j\in \Z}x\e^jP_jLF, & S(P_i)=&P_{-i}, & S(x)=&x^{-1}.
\end{align*}

It is clear that the map $\iota : U_\halfroot(\g)\rightarrow \Usplit$ given by $K\mapsto \sum_{j\in \Z}x^2\e^j P_j$, $E\mapsto E$ and $F\mapsto F$ is an injective morphism of Hopf algebras.

Notice that $K^l\mapsto \sum_{j\in \Z} x^{2l}\e^{jl}P_j$ and that $K^{2l}\mapsto x^{4l}$.

\subsection{The $R$-matrix}

For $n\in\N$ we set $[n]=\frac{\halfroot^n-\halfroot^{-n}}{\e-\e^{-1}}$ and $[n]!=[n][n-1]\dots[1]$.
Define
$$R_0=\sum_{i,j\in\Z}t^{ij}x^iP_j\otimes x^jP_i,$$
$$R_1=\sum_{n=0}^{l-1} \frac{(\e-\e^{-1})^{n}}{[n]!}E^n\otimes F^n,$$
and $R=R_0R_1\in \Usplit^{\otimes 2}$.

Now we will show that $R$ defines a quasitriangular structure on $\Usplit$.
\begin{lemma}\label{L:R0}
 The element $R_0$ satisfies the following identities.
 \begin{enumerate}
  \item \label{IL:A1} $(\Delta \otimes \Id)(R_0)=(R_0)_{13}(R_0)_{23}$,
  \item \label{IL:A2} $(\Id \otimes \Delta)(R_0)=(R_0)_{13}(R_0)_{12}$,
  \item \label{IL:A3} $R_0(E\otimes 1)=(E\otimes \io(K))R_0$, $R_0(1\otimes E)=(\io(K)\otimes E)R_0$,
  \item \label{IL:A4} $R_0(F\otimes 1)=(F\otimes \io(K)^{-1})R_0$, $R_0(1\otimes F)=(\io(K)^{-1}\otimes F)R_0$.
 \end{enumerate}
\end{lemma}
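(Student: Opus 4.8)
The plan is to prove all four identities by direct computation from the explicit formula $R_0 = \sum_{i,j\in\Z} t^{ij}\, x^iP_j\otimes x^jP_i$, using only the relations available in $\Usplit$: the orthogonality $P_iP_j = \delta_{ij}P_i$ together with $\sum_i P_i = 1$, the centrality of $x$, the commutation rules $P_iE = EP_{i-2}$ and $P_iF = FP_{i+2}$, the coproduct formulas $\Delta(P_k) = \sum_{a+b=k}P_a\otimes P_b$ and $\Delta(x) = x\otimes x$, and the elementary arithmetic identity $t^2 = \epsilon$ (so $t^{\pm 2i} = \epsilon^{\pm i}$). This last fact is what makes (\ref{IL:A3})--(\ref{IL:A4}) work: the index shift produced by $P_jE = EP_{j-2}$ turns into precisely the scalar $\epsilon^i$ appearing in $\iota(K) = \sum_j x^2\epsilon^j P_j$. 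Note also that $\iota(K)^{-1} = \sum_j x^{-2}\epsilon^{-j}P_j$.

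For (\ref{IL:A1}), apply $\Delta\otimes\Id$ termwise: since $\Delta(x^iP_j) = \sum_{a+b=j} x^iP_a\otimes x^iP_b$ and $j = a+b$ forces $t^{ij} = t^{ia}t^{ib}$ and $x^j = x^ax^b$, one gets $(\Delta\otimes\Id)(R_0) = \sum_{i,a,b} t^{ia}t^{ib}\, x^iP_a\otimes x^iP_b\otimes x^{a+b}P_i$. On the other side $(R_0)_{13}(R_0)_{23} = \sum_{i,j,k,l} t^{ij}t^{kl}\, x^iP_j\otimes x^kP_l\otimes x^jP_i\,x^lP_k$, and centrality of $x$ with $P_iP_k = \delta_{ik}P_i$ collapses the third leg to $\delta_{ik}\,x^{j+l}P_i$, yielding $\sum_{i,j,l} t^{ij}t^{il}\, x^iP_j\otimes x^iP_l\otimes x^{j+l}P_i$; the two expressions agree after relabeling $(a,b)\mapsto(j,l)$. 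Identity (\ref{IL:A2}) is the same computation with $\Id\otimes\Delta$ on the left and $(R_0)_{13}(R_0)_{12}$ on the right, using $\Delta(x^jP_i) = \sum_{a+b=i} x^jP_a\otimes x^jP_b$.

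For (\ref{IL:A3}) and (\ref{IL:A4}), push $E$ (resp.\ $F$) to the left using centrality of $x$ and $P_jE = EP_{j-2}$ (resp.\ $P_jF = FP_{j+2}$), then reindex. Concretely $R_0(E\otimes 1) = \sum_{i,j} t^{ij}\, Ex^iP_{j-2}\otimes x^jP_i$, and substituting $j' = j-2$ gives $\sum_{i,j'} t^{ij'}t^{2i}\, Ex^iP_{j'}\otimes x^2x^{j'}P_i = \sum_{i,j'}\epsilon^i t^{ij'}\, Ex^iP_{j'}\otimes x^2x^{j'}P_i$; the same expression comes from $(E\otimes\iota(K))R_0$ once one uses $P_kx^jP_i = \delta_{ki}x^jP_i$. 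The $1\otimes E$ identity is handled identically, shifting the index of $P_i$ instead, and the two $F$-identities are the mirror versions, where the shift $P_jF = FP_{j+2}$ produces $t^{-2i} = \epsilon^{-i}$, matching $\iota(K)^{-1}$.

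There is no real obstacle: the whole lemma is mechanical bookkeeping. The only points that require attention are expanding $\Delta(P_k)$ before collecting the powers of $t$, keeping straight which tensor leg carries which power of $x$ and which idempotent, and remembering that $t^2 = \epsilon$ so that the scalars coming from $\iota(K)$ line up with the index shifts.
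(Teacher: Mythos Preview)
Your proof is correct and follows exactly the same approach as the paper: direct computation using $P_jE=EP_{j-2}$, $P_jF=FP_{j+2}$, orthogonality of the $P_i$, centrality of $x$, and $t^2=\epsilon$. The paper in fact only writes out the first identity of part~(\ref{IL:A3}) and declares the rest similar, so your version is simply a more explicit rendering of the same argument.
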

\begin{proof}
We will prove the first identity in Part \eqref{IL:A3}, the other identities follow similarly.  We have
 \begin{align*}
 R_0(E\otimes 1)&= \sum_{i,j\in\Z}t^{ij}x^iP_jE\otimes x^j P_i\\
 &=\sum_{i,j\in\Z}t^{ij}x^iEP_{j-2}\otimes x^j P_i\\
 &=\sum_{i,j\in\Z}t^{ij}x^iEP_{j}\otimes x^{j}x^2\halfroot^i P_i\\
 &=(E\otimes \io(K))R_0.
\end{align*}
\end{proof}
\begin{theorem}\label{T:quasitri}
 The pair $(\Usplit,R)$ is a quasitriangular Hopf algebra.
\end{theorem}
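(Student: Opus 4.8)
The plan is to verify the three defining axioms of a quasitriangular Hopf algebra for $(\Usplit, R)$ with $R = R_0 R_1$: that $R$ is invertible, that $(\Delta\otimes\Id)(R) = R_{13}R_{23}$ and $(\Id\otimes\Delta)(R) = R_{13}R_{12}$, and that $\Delta^{\mathrm{op}}(a) = R\,\Delta(a)\,R^{-1}$ for all $a\in\Usplit$. The strategy throughout is to reduce everything to $U_\halfroot(\g)$ via the injection $\iota$, since $R_1$ is literally the image under $\iota\otimes\iota$ of the truncated quasi-$R$-matrix of $U_\halfroot(\g)$, whose relevant identities are classical (Lusztig, Chari--Pressley); Lemma~\ref{L:R0} supplies exactly the compatibility of $R_0$ with $E,F$ that is needed to splice the $R_0$ and $R_1$ parts together.

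First I would record the auxiliary identities for $R_1$. Using $\Delta(E) = \sum_j E\otimes x^2\e^j P_j + 1\otimes E = E\otimes\iota(K) + 1\otimes E$ and the analogous formula for $F$, one checks by the usual $q$-binomial induction that
\[
(\Delta\otimes\Id)(R_1) = (R_1)_{13}\,(\iota(K)\otimes 1\otimes 1)\,(R_1)_{23}\,(\iota(K)^{-1}\otimes 1\otimes 1),
\]
and symmetrically for $(\Id\otimes\Delta)(R_1)$; here the truncation at $n=l-1$ is harmless because $E^l, F^l$ act as zero in $\Usplit$ (they lie in the central ideal that was quotiented out), so no boundary terms appear. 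Next, combine these with Lemma~\ref{L:R0}: from parts \eqref{IL:A1}--\eqref{IL:A2} the $R_0$ factor already obeys the hexagon-type relations on the nose, and from parts \eqref{IL:A3}--\eqref{IL:A4} moving $R_0$ past the $E^n\otimes F^n$ tails of $R_1$ produces exactly the conjugating powers of $\iota(K)$ that cancel the spurious factors above. Assembling $R = R_0 R_1$ then yields $(\Delta\otimes\Id)(R) = R_{13}R_{23}$ and $(\Id\otimes\Delta)(R) = R_{13}R_{12}$.

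For the intertwining axiom $\Delta^{\mathrm{op}}(a)R = R\Delta(a)$ it suffices to check it on the algebra generators $x, P_i, E, F$. On $x$ and the $P_i$ it is immediate from $\Delta(x) = x\otimes x$ being cocommutative and from the explicit form of $R_0$ (which is built from $x$ and the $P_i$ only) together with the fact that $R_1$ commutes with $\Delta(x)$ and intertwines the two coproducts of the $P_i$ by the relations $P_iE = EP_{i-2}$, $P_iF = FP_{i+2}$. The substantive cases are $a = E$ and $a = F$: here one writes $\Delta(E)$ and $\Delta^{\mathrm{op}}(E)$ out, uses Lemma~\ref{L:R0}\eqref{IL:A3} to commute $R_0$ with the $E$-terms, and then reduces to the known identity $\Delta^{\mathrm{op}}(E)R_1 = R_1\Delta(E)$ inside $U_\halfroot(\g)^{\otimes 2}$ (again valid because the length-$l$ truncation costs nothing), and symmetrically for $F$ using \eqref{IL:A4}. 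Finally, invertibility of $R$ follows from invertibility of $R_0$ (with $R_0^{-1} = \sum_{i,j} t^{-ij} x^{-i} P_j\otimes x^{-j} P_i$, a finite-type expression on each weight space) and of $R_1$ (unitriangular, hence $R_1^{-1} = \sum_{n=0}^{l-1} \frac{(-1)^n \e^{-n(n-1)/2}(\e-\e^{-1})^n}{[n]!} E^n\otimes F^n$ after the usual computation). The main obstacle I anticipate is bookkeeping rather than conceptual: one must be careful that all the infinite sums $\sum_{j\in\Z} P_j$ are locally finite on every finite-dimensional module (so that $R$, $R^{-1}$, and all the manipulations make sense termwise), and that the powers of $\iota(K) = \sum_j x^2\e^j P_j$ generated by Lemma~\ref{L:R0} are tracked with the correct signs of the exponent when they are pushed through $E^n$ versus $F^n$.
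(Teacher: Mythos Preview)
Your proposal is correct and follows essentially the same approach as the paper's own proof. The paper compresses the entire argument into a single observation---by Lemma~\ref{L:R0}, $R_0$ has the same commutation relations with $E$ and $F$ as the Cartan factor $\exp(\frac{h}{4}H\otimes H)$ in the $\C[[h]]$-algebra $U_h(\g)$, so the quasitriangularity of $R=R_0R_1$ follows formally from the classical computation there---and you have simply unpacked that reduction into its constituent verifications (hexagon relations for $R_0$ and $R_1$ separately, splicing via the conjugating $\iota(K)$-factors, intertwining on generators, and invertibility).
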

\begin{proof}
Lemma \ref{L:R0} implies that $R_0$ has the same commutator relation with $E$ and $F$ as the element $\exp(\frac{h}{4}(H\otimes H))$ in the $\C[[h]]$-algebra $U_h(\g)$. Thus, we conclude
\begin{align*}
 (\Delta \otimes \Id)(R)&=R_{13}R_{23}, & (\Id \otimes \Delta)(R)&=R_{13}R_{12},
\end{align*}
and
$$ \Delta^{op}(a)=R\Delta(a)R^{-1}, \;\; \text{ for } a\in \Usplit $$
where $\Delta^{op}=\tau \circ \Delta$ and $\tau$ is the
permutation $a\otimes b\mapsto b\otimes a$.
\end{proof}

Let $R=\sum s_i \otimes t_i$ and define $u=\sum S(t_i)s_i$.  Then following
 \cite{Dr} we have
\begin{align}\label{E:Ru}
  \epsilon(u)&=1, & \Delta(u)&=\left(R_{21}R\right)^{-1}(u\otimes u), &
  S^2(a)&=uau^{-1}
\end{align}
for all $a\in\Usplit$.

A direct computation shows that for all $a\in \Usplit$ we
have
\begin{equation}\label{E:S2K}
 S^2(a)=\io(K)a\io(K)^{-1}.
\end{equation}
Let $\tilde{\theta}=u\io(K)^{-1}=\io(K)^{-1}u$. Comparing this formula with the formula for the $S^2$ involving $u$ we see that $\tilde{\theta}$ is central.

\begin{lemma} \label{L:theta}The element $\tilde{\theta}$ satisfies the relations:
 \begin{align*}
 \epsilon(\tilde{\theta})&=1, & \Delta(\tilde{\theta})&=\left(R_{21}R\right)^{-1}(\tilde{\theta}\otimes\tilde{\theta}), & S(\tilde{\theta})&=\io(K)^{2l}\tilde{\theta}.
 \end{align*}
\end{lemma}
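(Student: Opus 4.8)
The plan is to derive each of the three identities for $\tilde\theta = u\io(K)^{-1} = \io(K)^{-1}u$ from the corresponding known property of $u$ recorded in \eqref{E:Ru}, together with the relation \eqref{E:S2K} and the explicit comultiplication and antipode formulas for $\io(K)$. The counit identity is immediate: $\epsilon(\io(K)) = \epsilon\bigl(\sum_j x^2\e^j P_j\bigr) = \sum_j \epsilon(x)^2\e^j\epsilon(P_j) = \epsilon(x)^2\e^0\cdot 1 = 1$, so $\epsilon(\tilde\theta) = \epsilon(u)\epsilon(\io(K))^{-1} = 1$ by \eqref{E:Ru}.

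For the comultiplication identity, first observe that $\io(K)$ is grouplike: $\Delta(\io(K)) = \io(K)\otimes\io(K)$, since $\iota$ is a Hopf algebra morphism and $K$ is grouplike in $U_\halfroot(\g)$. Hence $\Delta(\io(K)^{-1}) = \io(K)^{-1}\otimes\io(K)^{-1}$. Now compute $\Delta(\tilde\theta) = \Delta(u)\Delta(\io(K))^{-1} = (R_{21}R)^{-1}(u\otimes u)(\io(K)^{-1}\otimes\io(K)^{-1})$. Since $\tilde\theta$ is central and $u = \tilde\theta\,\io(K)$, one has $u\otimes u = (\tilde\theta\otimes\tilde\theta)(\io(K)\otimes\io(K))$, and because $\tilde\theta\otimes\tilde\theta$ is central in $\Usplit^{\otimes 2}$ it may be moved past $(R_{21}R)^{-1}$; the factors $(\io(K)\otimes\io(K))(\io(K)^{-1}\otimes\io(K)^{-1})$ cancel, yielding $\Delta(\tilde\theta) = (R_{21}R)^{-1}(\tilde\theta\otimes\tilde\theta)$. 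The only subtle point here is the centrality of $\tilde\theta\otimes\tilde\theta$ in the tensor square, which follows from centrality of $\tilde\theta$ factor by factor.

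For the antipode identity I would use the general fact (also from \cite{Dr}) that $S(u) = u^{-1}\io(K)^{2l}$ in the presence of a ribbon-type element; more directly, from $S^2(a) = uau^{-1}$ and $S^2(a) = \io(K)a\io(K)^{-1}$ one gets $\io(K)^{-1}u$ central, and applying $S$ to $\tilde\theta = \io(K)^{-1}u$ gives $S(\tilde\theta) = S(u)S(\io(K))^{-1} = S(u)\io(K)$. It remains to compute $S(u)$. Using the standard identity $S(u)u = u S(u)$ and $\Delta(u)$, together with the computation $S(\io(K)) = \io(K)^{-1}$ and the explicit formula $K^{2l}\mapsto x^{4l}$ noted in the excerpt, one identifies $uS(u) = \tilde\theta^2\io(K)^2\cdot(\text{correction})$; matching this against the known evaluation of $uS(u)$ as a central element built from the $R$-matrix produces the factor $\io(K)^{2l}$. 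So the main obstacle is this last step: carefully pinning down $S(u)$ and showing the discrepancy between $S^2$ implemented by $u$ versus by $\io(K)$ is exactly the grouplike element $\io(K)^{2l}$, which is where the root-of-unity parameter $l$ enters. Once $S(u) = u^{-1}\io(K)^{2l}$ is established, $S(\tilde\theta) = u^{-1}\io(K)^{2l}\io(K) = \io(K)^{2l}(\io(K)^{-1}u^{-1}\io(K))\cdots$; using centrality of $\tilde\theta$ to rewrite $u^{-1} = \io(K)^{-1}\tilde\theta^{-1}$ one concludes $S(\tilde\theta) = \io(K)^{2l}\tilde\theta^{-1}$, and I would expect the intended statement to read with $\tilde\theta^{-1}$ (or equivalently, after checking $\tilde\theta^2 = 1$ on the relevant blocks, with $\tilde\theta$ itself); I would verify which normalization is forced by the conventions above.
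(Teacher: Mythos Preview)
Your treatment of the counit and comultiplication identities is correct and is exactly what the paper does: both follow from \eqref{E:Ru} together with the fact that $\io(K)$ is grouplike.

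The antipode identity is where your argument breaks down. You write that you ``would use the general fact \ldots\ that $S(u)=u^{-1}\io(K)^{2l}$,'' but no such general fact is available: from $S^2(a)=uau^{-1}=\io(K)a\io(K)^{-1}$ you only learn that $\io(K)^{-1}u$ is central, which says nothing about $S(u)$ beyond what you already know. The subsequent manipulation is circular and leads you to $S(\tilde\theta)=\io(K)^{2l}\tilde\theta^{-1}$, after which you are forced to speculate that the statement is misnormalized or that $\tilde\theta^2=1$ on blocks. Neither is the case here; the exponent $2l$ is not something that drops out of abstract Drinfeld-element identities, it comes from the specific root-of-unity $R$-matrix.

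The paper does not attempt an abstract derivation at all. It proves the antipode relation by direct computation of $S(u\io(K)^{-1})$ versus $u\io(K)^{-1}$ using the explicit $R=R_0R_1$, referring to \cite{Oh} for the details, and offers as an alternative the following argument: check the identity on all generic irreducible modules $V(a,k)$ (where both sides act by easily computable scalars), and then use that $\Usplit$ is finitely generated and finite over its center to conclude the identity holds in the algebra, including at the special values of $x$. Either of these routes would close the gap; your abstract approach, as written, does not.
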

\begin{proof}
The first two relations follow from \eqref{E:Ru} and the definition of the counit and coproduct of $\io(K)$.
The last relation can be proven by direct computation of $S(u\io(K)^{-1})$ and of $u\io(K)^{-1}$. This computation was done essentially in \cite{Oh}.
One can also prove the last identity by computing how $S(\tilde{\theta})$ and $\tilde{\theta}$ act on generic irreducible
modules. It is easy to see that on these modules the identity holds. On the other hand our algebra is finitely generated and
is finite dimensional over the center. This implies that
the identity $S(\tilde{\theta})=\io(K)^{2l}\tilde{\theta}$
holds not only for generic points but also for special points
(where $x$ acts as $4l$-th root of unity).
\end{proof}

\subsection{The automorphism $\phi$}

\begin{proposition} \begin{enumerate}
\item The mapping $\phi: \Usplit\to \Usplit$ acting on generators as
\[
\phi(P_i)=P_{i+2}, \phi(x)=x\e, \phi(E)=E, \phi(F)=F
\]
extends uniquely to an algebra automorphism.
\item The set of fixed points form a Hopf subalgebra generated
by $x\sum_{i\in \Z}t^iP_i, x^{2l}, E, F$.
\end{enumerate}
\end{proposition}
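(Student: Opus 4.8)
The plan for part (1) is routine. Since the listed elements generate $\Usplit$, any algebra map out of $\Usplit$ is determined by its values on them, which gives uniqueness. For existence I would check that $\phi$ preserves each defining relation: the relations $P_iP_j=\delta_{ij}P_i$, $P_iE=EP_{i-2}$, $P_iF=FP_{i+2}$, $\sum_iP_i=1$ and the centrality of $x$ are preserved by inspection; the only one needing a word is the commutator relation, for which one applies $\phi$ to its right-hand side, uses $\phi(x)=x\e$, and reindexes $j\mapsto j+2$ to recover $\sum_k\frac{x^2\e^k-x^{-2}\e^{-k}}{\halfroot-\halfroot^{-1}}P_k=\phi(EF-FE)$. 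Thus $\phi$ is a well-defined endomorphism. The assignment $P_i\mapsto P_{i-2}$, $x\mapsto x\e^{-1}$, $E\mapsto E$, $F\mapsto F$ passes the same checks and is a two-sided inverse of $\phi$ on generators, hence everywhere, so $\phi$ is an automorphism.

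For part (2), put $X:=x\sum_{i\in\Z}t^iP_i$ and let $B$ be the subalgebra generated by $X^{\pm1}$, $x^{\pm2l}$, $E$, $F$. The inclusion $B\subseteq\Usplit^\phi$ is immediate: $\phi(E)=E$, $\phi(F)=F$, while $\phi(X)=x\e\sum_it^iP_{i+2}=x\e\,t^{-2}\sum_it^iP_i=X$ since $t^{-2}=\e^{-1}$, and $\phi(x^{2l})=\e^{2l}x^{2l}=x^{2l}$ since $\e^{2l}=1$; as $\Usplit^\phi$ is a subalgebra, $B\subseteq\Usplit^\phi$. For the reverse inclusion I would first record the triangular decomposition $\Usplit\cong\Usplit^-\otimes\mathcal A\otimes\Usplit^+$ (a standard PBW-type fact for such split forms), where $\Usplit^-$, $\Usplit^+$ are generated by $F$ and $E$ and $\mathcal A$ is the commutative subalgebra generated by $x^{\pm1}$ and the $P_i$, which (suitably interpreting the relation $\sum_iP_i=1$) is identified with maps $\Z\to\C[x^{\pm1}]$ via $\sum_if(i)P_i\leftrightarrow f$. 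Since $\phi$ fixes $E$ and $F$ and stabilises $\mathcal A$, under this decomposition it acts as $\Id\otimes\phi|_{\mathcal A}\otimes\Id$, so $\Usplit^\phi=\Usplit^-\otimes\mathcal A^\phi\otimes\Usplit^+$.

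It then remains to compute $\mathcal A^\phi$. On $\mathcal A$ the automorphism acts by $(\phi f)(i)=\sigma(f(i-2))$, where $\sigma\colon x^n\mapsto\e^nx^n$, so a fixed $f$ is determined by $f(0)$ and $f(1)$, giving an algebra isomorphism $\mathcal A^\phi\xrightarrow{\sim}\C[x^{\pm1}]\oplus\C[x^{\pm1}]$, $f\mapsto(f(0),f(1))$, under which $X\mapsto(x,xt)$ and $x^{2l}\mapsto(x^{2l},x^{2l})$. Since $t^{2l}=-1$ (as $m$ is odd), the products $(x,xt)^a(x^{2l},x^{2l})^b=(x^{a+2lb},t^ax^{a+2lb})$ realise both $(x^n,t^nx^n)$ and $(x^n,-t^nx^n)$ for every $n\in\Z$, and $\C$-combinations of these yield all $(x^n,0)$ and $(0,x^n)$, hence all of $\C[x^{\pm1}]\oplus\C[x^{\pm1}]$. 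Therefore $\mathcal A^\phi$ is generated by $X^{\pm1}$ and $x^{\pm2l}$, so $\Usplit^\phi=\Usplit^-\otimes\mathcal A^\phi\otimes\Usplit^+$ is spanned by products $F^amE^b$ with $m\in\mathcal A^\phi\subseteq B$; this gives $\Usplit^\phi\subseteq B$, and with the first inclusion, $\Usplit^\phi=B$.

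Finally, to see $B$ is a Hopf subalgebra I would use the rewritings $\sum_jx^2\e^jP_j=X^2=\io(K)$ and $\sum_jx^{-2}\e^{-j}P_j=X^{-2}=\io(K)^{-1}$, which turn the coproduct formulas into $\Delta(E)=E\otimes X^2+1\otimes E$ and $\Delta(F)=F\otimes1+X^{-2}\otimes F$, together with $\Delta(X)=X\otimes X$ (so $X$ is grouplike) and $\Delta(x^{2l})=x^{2l}\otimes x^{2l}$ — all in $B\otimes B$; moreover $\varepsilon$ sends $E,F$ to $0$ and $X,x^{2l}$ to $1$, and $S(E)=-E\io(K)^{-1}$, $S(F)=-\io(K)F$, $S(X)=X^{-1}$, $S(x^{2l})=x^{-2l}$ all lie in $B$. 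Hence $B$ is a sub-bialgebra stable under $S$, i.e.\ a Hopf subalgebra, and being $S$-stable it is already generated as a Hopf subalgebra by $x\sum_it^iP_i$, $x^{2l}$, $E$, $F$. I expect the main obstacle to be setting up the triangular decomposition together with the concrete description of $\mathcal A$ (handling the infinite sum $\sum_iP_i$ carefully), and then the exhaustion step showing that $X^{\pm1}$ and $x^{\pm2l}$ generate \emph{all} of $\mathcal A^\phi$: the containment $B\subseteq\Usplit^\phi$ is trivial, but that $B$ is the entire fixed-point algebra is the real content and rests on the arithmetic $t^{2l}=-1$.
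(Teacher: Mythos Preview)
Your argument is correct. The paper's own proof consists of two sentences: part (1) is ``clear from defining relations'' and part (2) ``follows immediately from the formula for $\phi$''. So for part (1) you have done exactly what the authors have in mind, and for part (2) you have supplied a genuine argument where the paper gives none.

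Your route for part (2) --- reducing via the triangular decomposition to the commutative piece $\mathcal A$, identifying $\mathcal A^\phi$ with $\C[x^{\pm1}]\times\C[x^{\pm1}]$ via $f\mapsto(f(0),f(1))$, and then using the arithmetic $t^{2l}=-1$ to show $X^{\pm1}$ and $x^{\pm2l}$ already exhaust it --- is clean and correct; the key step $(x,xt)^a(x^{2l},x^{2l})^b=(x^{a+2lb},t^ax^{a+2lb})$ with $t^a=(-1)^bt^{\,a+2lb}$ does exactly what you say. One small remark: you take $B$ to be generated by $X^{\pm1},x^{\pm2l},E,F$, while the paper lists only $X,x^{2l},E,F$. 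Your choice is the right one for showing $\Usplit^\phi=B$ as an \emph{algebra}; the paper's list makes literal sense only if ``generated'' is read as ``generated as a Hopf subalgebra'' (so that $S$ provides the inverses), which is precisely the reading you give at the end. This is a wording issue in the paper, not a defect in your proof.
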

\begin{proof}
The first part is clear from defining relations, the
second follows immediately from the formula for $\phi$.
\end{proof}

Note that despite the fact that fixed points of $\phi$
form a Hopf subalgebra, it is not a Hopf algebra automorphism.

The automorphism $\phi$ acts on the $R$-matrix as follows:
\[
(\phi\otimes id)(R)=(1\otimes x^{-2})R, \ \ (id\otimes \phi)(R)=(x^{-2}\otimes 1)(R)
\]

\begin{proposition}The automorphism $\phi$ acts on
$\tilde{\theta}$ as follows:
\[
\phi(\tilde{\theta})=x^{4}\tilde{\theta}
\]
\end{proposition}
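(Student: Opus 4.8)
The plan is to compute $\phi(\tilde\theta)$ directly from $\tilde\theta = u\,\io(K)^{-1}$ by tracking how $\phi$ acts on the two factors. First I would note that, since $\tilde\theta$ is central and the algebra is finitely generated and finite-dimensional over its center, it suffices to verify the identity on generic irreducible modules (exactly the argument already used in the proof of Lemma \ref{L:theta}); so I would reduce to checking scalars. On a generic irreducible module, $\tilde\theta$ acts by a scalar depending on $x$ (which acts by some scalar, say $\lambda$, with $\lambda^{4l}$ fixed on the block), and the twist of $\phi$ is precisely to replace $x$ by $x\e$, i.e. $\lambda \mapsto \lambda\e$. So the whole claim becomes: the scalar $\theta(\lambda)$ by which $\tilde\theta$ acts satisfies $\theta(\lambda\e)=\lambda^4\,\theta(\lambda)$, where $\lambda^4$ is the scalar by which $x^4$ acts.

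The cleaner route, avoiding explicit module computations, is to use the two structural facts already in hand: the behaviour of $\phi$ on the $R$-matrix, $(\phi\otimes id)(R)=(1\otimes x^{-2})R$ and $(id\otimes\phi)(R)=(x^{-2}\otimes 1)R$, together with the definition $u=\sum S(t_i)s_i$ for $R=\sum s_i\otimes t_i$. Applying $\phi$ to $u$: writing $\phi(u)=\sum \phi(S(t_i))\phi(s_i)=\sum S(\phi(t_i))\phi(s_i)$ (using that $\phi$, while not a Hopf automorphism, does commute with $S$ on the relevant generators — this needs a one-line check from the formulas for $S$ and $\phi$), and then substituting $(\phi\otimes\phi)(R)$. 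From the two displayed formulas one gets $(\phi\otimes\phi)(R)=(x^{-2}\otimes x^{-2})R$, so $\phi(u)=\sum S(x^{-2}t_i)\,x^{-2}s_i = x^{-2}\big(\sum S(t_i)s_i\big)x^{-2}\cdot(\text{correction})$; here I must be careful, because $S(x^{-2}t_i)=S(t_i)S(x^{-2})=S(t_i)x^{2}$, so $\phi(u)=\sum S(t_i)x^2 x^{-2}s_i$ — wait, the $x$'s are central, so this collapses and I instead get $\phi(u)=x^{-2}\sum S(t_i)s_i\cdot$ something; the upshot after correctly commuting the central $x$ past $S$ (note $S(x^{-2})=x^2$) is a clean monomial-in-$x$ multiple of $u$, which I expect to be $\phi(u)=x^{-4}x^{4}\cdots$ — the exponent bookkeeping is the content of the calculation. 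Meanwhile $\phi(\io(K)^{-1})$: from $\iota(K)=\sum_j x^2\e^j P_j$ we get $\phi(\iota(K))=\sum_j x^2\e^2\e^j P_{j+2}=\e^{2}\sum_k x^2\e^{k-2}P_k\cdot\e^{?}$ — again a pure reindexing, giving $\phi(\iota(K))=\e^{2}\iota(K)$ up to the shift, hence $\phi(\iota(K)^{-1})=\e^{-2}\iota(K)^{-1}$ (one checks the shift $P_j\mapsto P_{j+2}$ exactly cancels against nothing and the net factor is a power of $\e$). Combining the two pieces yields $\phi(\tilde\theta)=\phi(u)\phi(\iota(K)^{-1}) = (\text{power of }x)\cdot\tilde\theta$, and pinning down that the power is $x^4$ is then forced.

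The main obstacle is the sign/exponent bookkeeping in $\phi(u)$: one must handle carefully that $\phi$ is \emph{not} a coalgebra map, so $\phi(u)=\sum S(\phi(t_i))\phi(s_i)$ uses only that $\phi$ is an algebra map commuting with $S$, and then feed in $(\phi\otimes\phi)(R)=(x^{-2}\otimes x^{-2})R$ and the fact that $S(x^{-2})=x^{2}$ with $x$ central — these two $x$-shifts partially cancel, and getting the residual exponent right is where an error would creep in. As a safeguard I would cross-check the final exponent against the generic-module computation sketched in the first paragraph, where $\tilde\theta$ acts by an explicit ratio of $q$-factorials and powers of the $x$-eigenvalue, and confirm that replacing the eigenvalue $\lambda$ by $\lambda\e$ multiplies that scalar by $\lambda^4$; the two computations must agree, and their agreement, together with the density argument over the center, completes the proof at all points including the special ones where $x$ acts by a $4l$-th root of unity.
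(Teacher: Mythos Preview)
Your overall strategy --- compute $\phi(u)$ from the action of $\phi$ on $R$, then combine with $\phi(\io(K)^{-1})$ --- is exactly the paper's. But two of your intermediate claims are wrong, and they are not mere bookkeeping slips: each changes the answer.

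First, $\phi$ does \emph{not} commute with $S$. On $P_i$: $\phi(S(P_i))=\phi(P_{-i})=P_{-i+2}$, while $S(\phi(P_i))=S(P_{i+2})=P_{-i-2}$; likewise $\phi(S(x))=\e^{-1}x^{-1}\neq \e\, x^{-1}=S(\phi(x))$. So the step $\phi(u)=\sum S(\phi(t_i))\phi(s_i)$ is unjustified. Worse, if you push your own computation through with that (false) identity and $(\phi\otimes\phi)(R)=(x^{-2}\otimes x^{-2})R$, then since $S(x^{-2})=x^{2}$ and $x$ is central the two $x$-factors cancel exactly (your ``this collapses''), and you obtain $\phi(u)=u$ --- the wrong answer.

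Second, $\phi(\io(K))=\io(K)$, not $\e^{2}\io(K)$: from $\io(K)=\sum_j x^2\e^j P_j$ one has $\phi(\io(K))=\sum_j (x\e)^2\e^j P_{j+2}=\sum_k x^2\e^{k}P_k=\io(K)$. Indeed $\io(K)=\big(x\sum_j t^j P_j\big)^2$ lies in the $\phi$-fixed subalgebra described just above the proposition. Hence the entire factor $x^4$ must come from $\phi(u)$, not from $\io(K)^{-1}$.

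The paper sidesteps the $S$--$\phi$ commutation issue by applying $\phi\otimes\phi$ to $R^{-1}$ rather than to $R$. Since $\phi\otimes\phi$ is an algebra automorphism, $(\phi\otimes\phi)(R^{-1})=\big((x^{-2}\otimes x^{-2})R\big)^{-1}=(x^{2}\otimes x^{2})R^{-1}$. Writing $R^{-1}=\sum_i s_i\otimes S(t_i)$ and applying the multiplication $a\otimes b\mapsto ba$ gives $\phi(u)=\sum_i\phi(S(t_i))\phi(s_i)=\sum_i x^{2}S(t_i)\,x^{2}s_i=x^{4}u$ directly, with no need to ever commute $\phi$ past $S$. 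Together with $\phi(\io(K))=\io(K)$ this yields $\phi(\tilde\theta)=x^{4}\tilde\theta$. Your generic-module cross-check is a reasonable sanity test but is not needed once this is done.
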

\begin{proof} Let $R=\sum_i s_i\otimes t_i$, then $R^{-1}=\sum_i
s_i\otimes S(t_i)$. Combining the way how $\phi$ acts
on the $R$-matrix with the fact that it is an algebra automorphism we obtain $(\phi\otimes \phi)(R^{-1})=(x^2\otimes x^2)R^{-1}$.
Therefore $\phi(\sum_i S(t_i)s_i)=x^4\sum_iS(t_i)s_i$. thus, $\phi(u)=x^4u$. Since $\phi(\io(K))=\io(K)$, this proves the proposition.
\end{proof}

Define $\varepsilon=\sum_{j\in \Z}(-1)^j P_j$. It is central and
unipotent. It is also easy to see that $\phi$ acts trivially on it:
\[
\phi(\varepsilon)=\varepsilon
\]

\subsection{The extended $R$-matrix}
Consider the algebra $A_\ep=\Usplit\otimes_\C \C[\tau, \tau^{-1}]$. The element $\tau$ is central in this algebra.

\begin{remark} The motivation for this construction is
 the formal power series version of the algebra $\Usplit$,
 when instead of having Laurent polynomials in $x$ we have formal powers series z :
\[
x=\exp(\frac{imz\pi}{2l}), \ \ \tau=\exp(\frac{imz^2\pi}{4l})
\]
\end{remark}

Assume that the action of the comultiplication on $\tau$ is symmetric, i.e. $\Delta^{op}(\tau)=\Delta(\tau)$.
Define
\[
\rho=\Delta(\tau)\tau^{-1}\otimes\tau^{-1}
\]

Extend the action of the automorphism $\phi$ on $A_\ep$ as:
\[
\phi(\tau)=\ep x^2\tau, \ \ (\phi\otimes id)(\rho)=(1\otimes x^2)\rho
\]

Define the extended $R$-matrix as
\[
\RR=\rho R
\]

It is easy to see that $\RR$ defines a quasitriangular structure on $A_\ep$. It is also easy to see that
\[
(id\otimes \phi)(\RR)=(\phi\otimes id)(\RR)=\RR
\]

Define
\[
\theta_1=\tau^{-2}\io(K)^l\tilde{\theta}, \ \ \theta_2=\tau^{-2}\io(K)^l\varepsilon\tilde{\theta}
\]
Each of these elements satisfies the identities:
\begin{align*}
 \epsilon(\theta)&=1, & \Delta(\theta)&=\left(\RR_{21}\RR\right)^{-1}(\theta\otimes\theta), & S(\theta)&=\theta, & \phi(\theta)=\theta.
 \end{align*}

\section{The category $C_\ep$}

\subsection{Simple modules}
Here we will focus on simple modules over the
$\Usplit$, $U_\e(\g)'$, and $A_\epsilon$.
We will say $a\in \C^*$ is {\it generic} if $a^{4l}\neq 1$.

The following statement is a variation on a well known fact:
\begin{proposition} For each generic $a\in \C^*$ and $k\in \Z$ there exists unique simple $\Usplit$-module $V(a,k)$ with highest weight vector $v_k$ such that
\begin{align*}
 Ev_k&=0, & xv_{k}&=av_{k} & P_jv_{k}&=\delta_{j,k}v_{k}, & V(a,k)=\sum_{i=0}^{l-1}\C F^{i}v_{k}.
\end{align*}
\end{proposition}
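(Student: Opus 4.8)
The plan is to construct $V(a,k)$ explicitly as a quotient of a Verma-type module and then verify irreducibility by a direct computation of the action of $EF$ on the weight vectors. First I would form the free $\C$-module on symbols $v_k, F v_k, F^2 v_k, \dots$ and let $x$ act by $a$, let $P_j$ act on $F^i v_k$ by $\delta_{j,k-2i}$ (forced by the relation $P_j F = F P_{j+2}$), let $F$ act by $F\cdot F^i v_k = F^{i+1}v_k$, and let $E$ act by $E v_k = 0$ together with the relation $EF - FE = \sum_j \frac{x^2\e^j - x^{-2}\e^{-j}}{\e-\e^{-1}}P_j$, which determines $E F^{i}v_k$ inductively. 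A routine induction gives $E F^{i} v_k = c_i\, F^{i-1} v_k$ with $c_i = \sum_{s=0}^{i-1}\frac{a^2\e^{k-2s} - a^{-2}\e^{-(k-2s)}}{\e-\e^{-1}}$, a $q$-integer-type expression in closed form; one checks all defining relations of $\Usplit$ are satisfied, so this is a genuine module, call it $M(a,k)$.

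Next I would analyze when $M(a,k)$ is reducible. A submodule must be a sum of weight spaces (since the $P_j$ act diagonally with distinct weights), hence spanned by $\{F^i v_k : i \in S\}$ for some subset $S$ closed under the actions; closure under $F$ forces $S$ to be an up-set, so a proper nonzero submodule is $\bigoplus_{i\ge i_0}\C F^i v_k$ for some $i_0 \ge 1$, and this is a submodule precisely when $c_{i_0} = 0$. So I must show $c_i \neq 0$ for $1 \le i \le l-1$ and $c_l = 0$. The vanishing $c_l = 0$ is exactly the statement that $E^l$ (equivalently $F^l$) acts as zero, which is built into the passage to $U_\e(\g)'$; concretely $c_l = \sum_{s=0}^{l-1}\frac{a^2\e^{k-2s}-a^{-2}\e^{-(k-2s)}}{\e-\e^{-1}}$, and since $\e = t^2$ is a primitive $l$-th root of unity the sums $\sum_{s=0}^{l-1}\e^{\pm(k-2s)}$ vanish (as $\e^{-2}$ is again a primitive $l$-th root, so $s\mapsto k-2s$ runs over a full set of residues). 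Thus $V(a,k) := M(a,k)/\bigl(\bigoplus_{i\ge l}\C F^i v_k\bigr)$ has dimension $l$ and the stated basis.

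For irreducibility of $V(a,k)$ I would show $c_i \neq 0$ for $1\le i\le l-1$. Writing $c_i$ as a geometric-type sum and using $\e-\e^{-1}\neq 0$, the condition $c_i = 0$ becomes $a^2\sum_{s=0}^{i-1}\e^{k-2s} = a^{-2}\sum_{s=0}^{i-1}\e^{-(k-2s)}$, i.e. (after summing the geometric series, legitimate since $\e^{\pm 2}\neq 1$) an equation of the form $a^4 = \e^{j_0}$ for some integer $j_0$ depending on $i,k$; since $a$ is generic ($a^{4l}\neq 1$) while $\e^{j_0}$ is an $l$-th root of unity, this is impossible. Hence every $c_i$ with $1\le i\le l-1$ is nonzero, so $V(a,k)$ has no proper nonzero submodule. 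Uniqueness follows because any simple module with a vector killed by $E$, on which $x$ acts by $a$ and $P_k$ by $1$, receives a nonzero map from $M(a,k)$ factoring through $V(a,k)$, and a map between simple modules of the same dimension is an isomorphism. The main obstacle is purely bookkeeping: getting the closed form for $c_i$ right and correctly identifying the genericity condition $a^{4l}\neq 1$ as precisely what rules out all the intermediate vanishings while still allowing $c_l=0$; no conceptual difficulty is expected beyond that.
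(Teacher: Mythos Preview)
The paper does not actually prove this proposition; it is introduced as ``a variation on a well known fact'' and followed only by the explicit action formulas on the weight basis $v_{k-2i}=F^{i}v_{k}$. Your Verma-module argument is the standard proof of such a statement and is essentially correct.

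One small correction is worth noting. You write that $\epsilon=t^{2}$ is a primitive $l$-th root of unity, but in fact $\epsilon^{l}=t^{2l}=e^{i\pi m}=-1$ (since $m$ is odd), so $\epsilon$ is a primitive $2l$-th root. What your computation actually uses, and what you state correctly in the parenthetical, is that $\epsilon^{\pm 2}$ is a primitive $l$-th root of unity; this holds provided $\gcd(m,l)=1$, a hypothesis implicit throughout the paper (it is needed already for $[n]!\neq 0$ in the definition of $R_1$). Under that hypothesis your geometric sum for $c_l$ vanishes, and for $1\le i\le l-1$ the factor $[i]=(\epsilon^{i}-\epsilon^{-i})/(\epsilon-\epsilon^{-1})$ is nonzero, so summing the series is legitimate and $c_i=0$ forces $a^{4}=\epsilon^{2(i-k-1)}$, hence $a^{4l}=(\epsilon^{2l})^{\,i-k-1}=1$, contradicting genericity exactly as you say. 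Your aside that ``$c_l=0$ is exactly the statement that $E^{l}$ (equivalently $F^{l}$) acts as zero'' is a slight overstatement --- $c_l=0$ only says that $F^{l}v_k$ is a new highest-weight vector --- but the conclusion that $\bigoplus_{i\ge l}\C F^{i}v_k$ is a submodule, and hence that the $l$-dimensional quotient $V(a,k)$ is simple, is correct.
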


It is easy to compute the action of generators on the
weight basis $v_{k-2j}=F^jv_k$:
\begin{align*}
 xv_{k-2i}&=av_{k-2i} \\
 P_jv_{k-2i}&=\delta_{j,k-2i}v_{k-2i}, \; 0\geq i \geq l-1,\\
 Fv_{k-2i}&=v_{k-2i-2}, 1\leq i \leq l-1 \\ Ev_{k-2i}&=\left(\frac{a^2\halfroot^{k+i+1}-a^{-2}\halfroot^{-k-1-i}}{\halfroot -\halfroot^{-1}}\right)\left(\frac{\halfroot^{i} -\halfroot^{-i}}{\halfroot -\halfroot^{-1}}\right)v_{k-2i+2}, 0\leq i\leq l-1
\end{align*}

The homomorphism $\io: U_\e(\g)'\to \Usplit$ defines
on $V(a,k)$ the structure of a $U_\e(\g)'$-module.

\begin{proposition} The $U_\e(\g)'$-modules $V(a,k)$ are irreducible, and in addition,  $V(a,k)$ and $V(a\e,k+1)$ are
isomorphic as $U_\ep'$- modules.
\end{proposition}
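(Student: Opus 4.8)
\medskip
\emph{Proof sketch.} I would treat the two assertions separately.

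\emph{Irreducibility.} The plan is to reduce to the simplicity of $V(a,k)$ over $\Usplit$ already established. Note that $\io(K)=\sum_j x^2\e^jP_j$ acts on the weight basis $v_{k-2i}=F^iv_k$, $0\le i\le l-1$, by the scalar $a^2\e^{k-2i}$, and that these $l$ scalars are pairwise distinct (since $\e$ is a primitive $2l$th root of unity, which is exactly what makes $V(a,k)$ simple of dimension $l$). Hence, on $V(a,k)$, each $P_{k-2i}$ is the spectral projector onto a one--dimensional $\io(K)$--eigenspace and thus a polynomial in $\io(K)$; so $U_\e(\g)'$ and $\Usplit$ have the same image in $\End_\C V(a,k)$, and simplicity over $\Usplit$ forces simplicity over $U_\e(\g)'$. (One can also argue directly: a nonzero $U_\e(\g)'$--submodule is $\io(K)$--stable, hence a span of some of the $v_{k-2i}$, and the relations $Fv_{k-2i}=v_{k-2i-2}$ together with $Ev_{k-2i}=\bigl(\frac{a^2\e^{k+i+1}-a^{-2}\e^{-k-1-i}}{\e-\e^{-1}}\bigr)[i]\,v_{k-2i+2}$, whose coefficient is nonzero for $1\le i\le l-1$ by genericity $a^{4l}\neq 1$ and by $\e^{2i}\neq 1$, then generate the whole module from any basis vector.)

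\emph{Isomorphism.} The idea is to realize the target as a twist of $V(a,k)$ by an algebra automorphism of $\Usplit$ that restricts to the identity on $\io(U_\e(\g))$. The automorphism $\phi$ already has this property: $\phi(E)=E$, $\phi(F)=F$, and $\phi(\io(K))=\sum_j x^2\e^{j+2}P_{j+2}=\io(K)$, so $\phi\circ\io=\io$. Hence, for any $\Usplit$--module $M$, the identity map of the underlying vector space is a $U_\e(\g)'$--isomorphism $M\cong M^\phi$, where $M^\phi$ carries the action $a\cdot v:=\phi(a)v$. It remains to recognize $V(a,k)^\phi$: the line $\C v_k$ is still exactly $\ker E$, and on it $x$ now acts by $\phi(x)=x\e$, i.e.\ by $a\e$, while $P_j$ acts by $\phi(P_j)=P_{j+2}$, i.e.\ by $\delta_{j,k-2}$; by the uniqueness clause of the preceding proposition $V(a,k)^\phi\cong V(a\e,k-2)$, so $V(a,k)\cong V(a\e,k-2)$ over $U_\e(\g)'$, and iterating $\phi^{\pm 1}$ one gets $V(a,k)\cong V(a\e^{\,n},k-2n)$ for all $n\in\Z$. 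The single--unit index shift in the statement is obtained the same way from the companion automorphism $\psi$ with $\psi(P_i)=P_{i-1}$, $\psi(x)=xt^{-1}$, $\psi(E)=E$, $\psi(F)=F$: it is a well-defined automorphism of $\Usplit$ because $t^2=\e$, it satisfies $\psi\circ\io=\io$ since $\psi(\io(K))=\sum_j x^2t^{-2}\e^jP_{j-1}=\io(K)$, and twisting $V(a,k)$ by it yields the simple module with $x$--eigenvalue $at^{-1}$ and top $P$--index $k+1$. Equivalently and more explicitly: $V(a,k)$ and the target share the $\io(K)$--spectrum $\{a^2\e^{k-2i}\}$, so the linear map matching corresponding eigenlines is forced, and checking it intertwines $E$ and $F$ comes down to an identity among the displayed $E$--coefficients that holds precisely because of the shift of the $x$--eigenvalue.

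I expect the only genuine obstacle to be bookkeeping: verifying that the index--shifting automorphism is a well-defined algebra endomorphism of $\Usplit$ and restricts to the identity on $\io(U_\e(\g))$ (equivalently, that the explicit eigenline-matching map commutes with $E$ and $F$); once $\phi\circ\io=\io$ is established the rest is formal. A secondary point to keep track of is the genericity hypothesis, which is needed both so that all the modules in sight exist ($a^{4l}\neq 1$, and likewise for the twisted $x$--eigenvalue) and, in the direct irreducibility argument, for the nonvanishing of the $E$--coefficients.
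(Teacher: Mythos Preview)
The paper states this proposition without proof, so there is no argument in the paper to compare yours against. Your strategy is sound on both counts: recovering each $P_{k-2i}$ on $V(a,k)$ as a polynomial in $\io(K)$ (the $l$ eigenvalues $a^2\e^{k-2i}$ being distinct) shows that $\io(U_\e(\g)')$ and $\Usplit$ have the same image in $\End_\C V(a,k)$, so simplicity transfers; and twisting by an automorphism of $\Usplit$ that fixes $\io(U_\e(\g)')$ pointwise is exactly the right mechanism for the isomorphism claim. Your check that $\psi(P_i)=P_{i-1}$, $\psi(x)=xt^{-1}$, $\psi(E)=E$, $\psi(F)=F$ defines an automorphism with $\psi\circ\io=\io$ is correct (the commutator relation survives because $t^2=\e$).

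There is one discrepancy you glide over. Your automorphism $\psi$ yields $V(a,k)^\psi\cong V(at^{-1},k+1)$, not $V(a\e,k+1)$ as printed. In fact the proposition as stated cannot hold in general: any $U_\e(\g)'$-isomorphism must carry $\ker E$ to $\ker E$, hence match the $\io(K)$-eigenvalues on those lines, and $(a\e)^2\e^{k+1}=a^2\e^{k+3}\neq a^2\e^{k}$ unless $\e^3=1$. The correct version is $V(a,k)\cong V(at^{-1},k+1)$ (equivalently $V(at,k-1)$), which is exactly what your argument establishes; you should say so explicitly rather than leave the mismatch unremarked. Note also that genericity is preserved under the shift since $t^{4l}=1$, so $(at^{\pm1})^{4l}=a^{4l}\neq 1$.
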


In particular central element $\varepsilon$ acts on $V(a,k)$
as $(-1)^k$.

Note that the modules $V(a,k)$ and $V(a\e,k+1)$
are not isomorphic as $\Usplit$-modules.

For $z\in \C$ we can always define the $A_\ep$-module structure 
on the representation $V(e^{\frac{imz\pi}{2l}},k)$ of $\Usplit$ by defining $\tau v=\exp(\frac{imz^2\pi}{4l})v$ for any $v\in V(e^{\frac{imz\pi}{2l}},k)$. It is clear that this defines an  irreducible  $A_\ep$-module.

\subsection{The category $C_\ep$}

{\it Objects} of the category $C_\ep$ are finite
dimensional $\Usplit$-modules $(V, \pi_V: A_\ep\to End(V))$
on which $x$
acts as a multiplication by a scalar $\pi_V(x)v=\exp(\frac{imz\pi}{2l})v$. Here $v\in V$, $m,l$ are as above and $z\in \C$.
The central element $\tau$ on such module acts by multiplication on $\exp(\frac{imz^2\pi}{4l})$.

{\it Morphisms} between two such modules are all
$\Usplit$-invariant linear maps.

This category is monoidal because it is a category of finite dimensional modules over a Hopf algebra.
It is a rigid monoidal category with the left
dual modules defined as
usual $(V^*, \pi_{V^*}=\pi^*_V\circ S)$ where $\pi^*_V(a)$
is the dual linear map to $\pi_V(a)$ and with usual injection and evaluation morphisms:
\begin{align*}
i_{V} :& \C \rightarrow V\otimes V^{*}, \text{ given by }
1 \mapsto \sum
 e_i\otimes e^i, \\
 e_{V}: & V^*\otimes V\rightarrow \C, \text{ given by }
  f\otimes w \mapsto f(w)
\end{align*}

It is easy to see that $C_\ep$ is a braided category with
the commutativity morphism $c=\{c_{V,W}\}$ where $c_{V,W}:V\otimes W \rightarrow
W \otimes V$ given by $v\otimes w \mapsto \tau(R(v\otimes w))$.  It is also a ribbon category with the ribbon morphisms (twists): $\theta_V:V\rightarrow V, v\mapsto \theta^{-1}v$.

In this category the braiding and the ribbon structure
agree with isomorphisms of modules induced by $\phi$.

The objects of this category are semisimple for generic
$z$, i.e. when $mz$ it is not an integer.

The linear mapping $v\to \theta^{-1}uv$ is a
an isomorphism of representations $V\to V^{**}$. Recall that the
quantum (functorial) dimension of $V$ is defined
as the composition mapping $\C\to V\otimes V^*\to V^{**}\otimes V^*\to \C$, or, as $tr_V(\theta^{-1}u)=tr_V(\tau^2\io(K)^{1-l})$.  It is clear that the
quantum dimension of any generic representation is zero.

\section{Invariants of links}

\subsection{Ambi-elements in a ribbon category}\label{S:ambi}
Here we will recall some results and definitions from \cite{GPT}.

Recall that in a ribbon category there is a natural notion of
a trace of an endomorphism of an object.  If $f: V\to V$, its
trace is
\[
\tr_V(f)=e_{V^*} \circ(\mu_V\otimes id_{V^*})\circ(f\otimes id_{V^*})\circ  i_V
\]
where $\mu_V:V\to V^{**}$ is the isomorphism between $V$ and $V^{**}$ determined by the braiding and the ribbon structure.
For the category of $\stackrel{\circ}{U}_\epsilon(sl_2)$-modules, $\mu_V=\pi_V(\iota(K)^{1-l})$.

We will use the following notations.  If $V$ is simple and $f:V\to V$ is a morphism, by definition $f=c(f) id_V$ for some $c(f)\in \kappa$ where $\kappa$ is the base field for our category.
We will assume $\kappa=\C$ and will use the graphical
notation for $c(f)$ shown on Fig. \ref{cent-elem}.

\begin{figure}
  \begin{center}
   % \rotatebox{90}{\scalebox{0.4}{\includegraphics{cent-elem.pdf}}}
  $ \put(12,-2){{\Large $f$}}  \epsh{rtfig1}{14ex} \put(-43,-2){{\Large $f$}}$
   \caption{}
    \label{cent-elem}
  \end{center}
\end{figure}

Define $S'(U,V)=(id_V\otimes \tr_U)(c_{U,V}c_{V,U})$ where $c_{V,U}:V\otimes U\to U\otimes V$ is the commutativity constraint in the category. This element can be written graphically as on
Fig. \ref{S}.

\begin{figure}
  \begin{center}
%    \rotatebox{90}{\scalebox{0.4}{\includegraphics{S.pdf}}}
$
\text{{\Large $S'(U,V)$}}=\epsh{rtfig2}{14ex}  \put(-37,23){{\large $V$}} \put(-16,11){{\large $U$}}$
   \caption{}
    \label{S}
  \end{center}
\end{figure}

For any $f:V\to V$ and simple $V$ we have the identity shown  on Fig. \ref{cent-S}.

\begin{figure}
  \begin{center}
%    \rotatebox{90}{\scalebox{0.4}{\includegraphics{cent-S.pdf}}}
$ \put(9,0){{\Large $f$}}  \put(90,0){{\Large $f$}}  \put(40,22){{\large $V$}} \put(53,-10){{\large $U$}} \epsh{rtfig3}{14ex}\put(-62,20){{\large $V$}}   \put(-37,23){{\large $V$}} \put(-16,11){{\large $U$}}$
   \caption{}
    \label{cent-S}
  \end{center}
\end{figure}

Recall that an object $V$ is called {\it ambidextrous} if
$(id_V\otimes \tr_V)(f)=(\tr_V\otimes id_V)(f)$ for any $f:V^{\otimes 2}\to V^{\otimes 2}$ (see also Fig. \ref{F:ambi}). Here we will call such an object an {\it ambi-object}.

\begin{figure}
  \begin{center}
  %  \rotatebox{90}{\scalebox{0.4}{\includegraphics{ambi.pdf}}}
  $ \put(7,0){{\large $V$}}   \put(43,24){{\large $V$}}  \put(102,0){{\Large $f$}}  \put(37,0){{\Large $f$}}  \epsh{rtfig4}{14ex}  \put(-16,0){{\large $V$}}   \put(-52,24){{\large $V$}} $
   \caption{}
    \label{F:ambi}
  \end{center}
\end{figure}

The following observation is one of the key tools from \cite{GPT}.
Let $U$ be a simple ambi-object.
Then for any two objects $V_1$ and $V_2$ and any morphism $f:V_1\otimes V_2\rightarrow V_1\otimes V_2$ the following identity holds:
\begin{equation}\label{s-iden}
S'(U^*,V_1^*)S'(V_2,U)c((\tr_{V_1}\otimes id_{V_2})(f))=
S'(V_1,U)S'(U^*,V_2^*)c((id_{V_1}\otimes \tr_{V_2})(f))
\end{equation}
It follows from the ambi-identity for the tangle from Fig. \ref{ambi-proof}.  Here notice that $S'(V^*,W^*)=S'(V,W)$.

\begin{figure}
  \begin{center}
  %  \rotatebox{90}{\scalebox{0.4}{\includegraphics{ambi-proof.pdf}}}
  $\put(62,33){{\large $V_1$}} \put(79,33){{\large $V_2$}} \put(14,7){{\large $U$}} \put(72,0){{\LARGE $f$}}
\epsh{rtfig5l}{18ex}  \put(-21,24){{\large $U$}}
  =\put(65,33){{\large $V_1$}} \put(83,33){{\large $V_2$}} \put(15,24){{\large $U$}} \put(75,-2){{\LARGE $f$}}
    \epsh{rtfig5r}{18ex} \put(-18,7){{\large $U$}}
   $
   \caption{}
    \label{ambi-proof}
  \end{center}
\end{figure}

We have the following corollary of the identity (\ref{s-iden}).
\begin{proposition}\label{prop-GPT}
\begin{itemize}
\item If $J$ is a simple ambi-object of the category $\C$,
then all objects $U \in A(J)=\{W|S'(W,J)\neq 0, S'(J,W)\neq 0, W \mbox{ is simple}\}$ are also ambi-objects.

\item
Let  $T_V$ be a $(1,1)$-tangle colored by elements of $\cat$, whose open component is colored by an element $V\in A(J)$.  Then the expression in Fig. \ref{inv-link} is an invariant of the link obtained by the closure of $T_V$. In this expression $U$ is any ambi-element from $A(J)$.
\end{itemize}
\end{proposition}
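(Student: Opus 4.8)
Both statements will be deduced from the single identity (\ref{s-iden}); in doing so I will use repeatedly that an endomorphism of a simple object is a scalar (so that $S'(X,Y)$, and the two partial traces below, are scalars whenever $X$ or $Y$ is simple) and that $S'(V^*,W^*)=S'(V,W)$. \emph{First bullet.} Fix a simple $W\in A(J)$ and an arbitrary $f\colon W^{\otimes 2}\to W^{\otimes 2}$. Since $W$ is simple, $(\tr_W\otimes\Id_W)(f)$ and $(\Id_W\otimes\tr_W)(f)$ are both multiples of $\Id_W$, so it suffices to check $c\big((\tr_W\otimes\Id_W)(f)\big)=c\big((\Id_W\otimes\tr_W)(f)\big)$. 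I apply (\ref{s-iden}) with $U:=J$ and $V_1=V_2:=W$. Because $S'(J^*,W^*)=S'(J,W)$, the two sides of (\ref{s-iden}) then carry one and the same prefactor $S'(J,W)\,S'(W,J)$, and this scalar is invertible precisely because $W\in A(J)$; cancelling it gives the desired equality, i.e. $W$ is an ambi-object. The only thing to verify here is that the prefactors really coincide, which is where $S'(V^*,W^*)=S'(V,W)$ is used.

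\emph{Second bullet.} Recall that the Reshetikhin--Turaev functor sends a $(1,1)$-tangle $T_V$ whose open strand is colored by a simple $V$ to $\langle T_V\rangle\,\Id_V$ for a scalar $\langle T_V\rangle$, and that the quantity in Fig.~\ref{inv-link} is the corresponding renormalization of this scalar by the $S'$-factors of the auxiliary object $U$; with the conventions of that figure it is $\Phi(T_V):=\langle T_V\rangle\,S'(V,U)/S'(U,V)$, which is well defined since $V\in A(J)$ forces $S'(V,U),S'(U,V)\neq 0$. I must show $\Phi(T_V)$ depends only on the link $L$ obtained by closing $T_V$. Any two $(1,1)$-tangle presentations of a fixed such $L$ (with open component colored by an object of $A(J)$) are related by finitely many moves of two types: (i) sliding the cut point along its own component, and (ii) transferring the cut to another component of $L$, necessarily colored by some $V'\in A(J)$. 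Move (i) leaves $\langle T_V\rangle$, hence $\Phi(T_V)$, unchanged, since the functor is invariant under ambient isotopy of framed tangles.

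For move (ii) I would isotope $L$ so that some ball $B$ meets $L$ in exactly one unknotted arc of each of the two components involved, the rest of $L$ assembling into a single morphism $f\colon V\otimes V'\to V\otimes V'$; then cutting inside $B$ along the first arc presents $T_V$ with $\langle T_V\rangle\,\Id_V=(\Id_V\otimes\tr_{V'})(f)$, and cutting along the second presents $T_{V'}$ with $\langle T_{V'}\rangle\,\Id_{V'}=(\tr_V\otimes\Id_{V'})(f)$. Feeding this $f$ into (\ref{s-iden}) with $U$ (which is an ambi-object by the first bullet), $V_1:=V$, $V_2:=V'$, and cancelling the $S'$-scalars (nonzero because $V,V'\in A(J)$), one gets exactly $\langle T_V\rangle\,S'(V,U)/S'(U,V)=\langle T_{V'}\rangle\,S'(V',U)/S'(U,V')$, i.e. $\Phi(T_V)=\Phi(T_{V'})$. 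I expect the main obstacle to be only this geometric step --- arranging $L$ so that the two candidate components become parallel strands bounding one morphism --- together with the bookkeeping of which $S'$ (and which dual) sits on which side of (\ref{s-iden}); once that is in place the conclusion is one line of algebra. As a complement one can, again from (\ref{s-iden}) (now with the object $U$ in the slot $V_1$ and $f=c_{V,U}^{-1}c_{U,V}^{-1}$, whose partial traces are $S'(U,V)\Id_V$ and $S'(V,U)\Id_U$), obtain the cocycle relation $S'(J,U)\,S'(V,J)\,S'(U,V)=S'(U,J)\,S'(J,V)\,S'(V,U)$, which shows that the recipe is compatible with changing the auxiliary ambi-element $U\in A(J)$ and that $V\mapsto S'(V,U)/S'(U,V)$ is a well-defined modified dimension on $A(J)$.
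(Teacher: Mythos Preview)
The paper does not give a proof of this proposition; it is recalled from \cite{GPT} and presented simply as ``a corollary of the identity~(\ref{s-iden}).'' Your argument is precisely that corollary spelled out: for the first bullet you specialize~(\ref{s-iden}) to $U=J$, $V_1=V_2=W$ and cancel the common nonzero factor $S'(J,W)S'(W,J)$; for the second you run the standard cut--and--compare argument and feed the resulting $f\colon V\otimes V'\to V\otimes V'$ back into~(\ref{s-iden}). So your approach coincides with what the paper indicates.

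One small point to tighten: the assertion that ``$V\in A(J)$ forces $S'(V,U),S'(U,V)\neq 0$'' for an arbitrary $U\in A(J)$ does not follow from the definition of $A(J)$, which only controls $S'(\,\cdot\,,J)$ and $S'(J,\,\cdot\,)$. The safest route is to run the second-bullet argument first with $U=J$ itself (where the required nonvanishing is exactly the hypothesis $V,V'\in A(J)$), and then use your cocycle identity $S'(J,U)S'(V,J)S'(U,V)=S'(U,J)S'(J,V)S'(V,U)$ to pass to a general $U\in A(J)$. In the paper's $sl_2$ application this subtlety is immaterial, since Lemma~\ref{L:S'} shows $S'$ is nonzero on every pair of generic simples.
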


\begin{figure}
  \begin{center}
%    \rotatebox{90}{\scalebox{0.4}{\includegraphics{inv-link.pdf}}}
$\put(22,-3){{\Large $T_V$}}
\epsh{rtfig6}{14ex}  \put(-24,20){{\large $V$}} \put(-24,-23){{\large $V$}} \text{{\LARGE%\Huge
$\frac{S'(V,U)}{S'(U,V)}$}}=\text{{\Large inv$(\widehat{T}_V)$}}$
   \caption{$\widehat{T}_V$ is the closer of $T_V$.}
    \label{inv-link}
  \end{center}
\end{figure}

\subsection{Ambi-modules over $\Usplit$}
\subsubsection{} As it was pointed out at the end of the previous section all modules $V(a,k)$ have zero quantum dimension.
Here we will prove that they are all ambi-objects in the category
$\Usplit-mod$.

\begin{lemma}\label{L:ambi}
The module $V(a,k)$ is an ambi-object for all $k\in \Z$
and $a^{8l}\neq 1$.
\end{lemma}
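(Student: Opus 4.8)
The plan is to reduce the ambidexterity of $V(a,k)$ to a direct computation with a small family of well-understood endomorphisms of $V(a,k)^{\otimes 2}$. First I would recall that in any ribbon category, to check that a simple object $V$ is an ambi-object it suffices to verify the identity $(\Id_V\otimes\tr_V)(f)=(\tr_V\otimes\Id_V)(f)$ for $f$ ranging over a spanning set of $\End(V^{\otimes 2})$; since both sides are linear in $f$, I only need to test generators. Because $c_{V,V}$ and $c_{V,V}^{-1}$ together with the structure morphisms generate the relevant algebra of tangle-endomorphisms, and because $V(a,k)^{\otimes 2}$ decomposes (for generic $a$, hence on a Zariski-dense set) into a direct sum of simple modules $V(a^2\e^?,\cdot)$, it is in fact enough to check the identity on the idempotents projecting onto these simple summands, equivalently on powers of the braiding $c=c_{V,V}$.

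Next I would make the two traces explicit. Using $\mu_V=\pi_V(\io(K)^{1-l})$ from the excerpt, both $(\Id_V\otimes\tr_V)(f)$ and $(\tr_V\otimes\Id_V)(f)$ are honest elements of $\End(V)=\C\cdot\Id_V$ (since $V$ is simple), so each is a scalar; I want to show these two scalars coincide. The clean way is to exploit the automorphism $\phi$ and the central twist: the braiding on $C_\ep$ is $c_{V,W}(v\otimes w)=\tau R(v\otimes w)$, and $\phi$ fixes $\RR$ and $\theta$ while sending $V(a,k)$ to $V(a\e,k+1)$. I would use the partial-trace identity encoded in Figure \ref{cent-S} together with the fact that $S'(V(a,k),U)$ and $S'(U,V(a,k))$ can be computed from the $R$-matrix acting on the explicit weight basis $v_{k-2i}=F^i v_k$. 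Concretely, I would compute $\tr_{V(a,k)}\bigl(c_{V,V}^{\,n}\cdot(\text{something})\bigr)$ by summing the diagonal matrix coefficients of $R_0$ (which is diagonal in the weight basis) twisted by the geometric series $R_1=\sum (\e-\e^{-1})^n/[n]!\,E^n\otimes F^n$, and show the left- and right-partial-trace computations produce the same rational function of $a$.

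The key mechanism, and I expect the main obstacle, is controlling the apparent asymmetry between "trace on the left tensor factor" and "trace on the right tensor factor" coming from the non-cocommutativity of $\Delta$ and the fact that $V(a,k)^*$ is $V(a^{-1}\e^{-k},\,-k)$ up to a twist rather than $V(a,k)$ itself. The hypothesis $a^{8l}\neq 1$ (stronger than the genericity condition $a^{4l}\neq1$) is surely what is needed to guarantee that none of the denominators $[i]!$ or the weight factors $a^2\e^{k+i+1}-a^{-2}\e^{-k-1-i}$ appearing in the $E$-action vanish, so that $V(a,k)^{\otimes 2}$ is genuinely semisimple and the spanning-set reduction is valid; I would state this as a lemma and check the vanishing loci explicitly. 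Once semisimplicity is in hand, the two partial traces are each diagonalizable simultaneously with $c_{V,V}$, and the eigenvalue bookkeeping — using that $\theta$ acts by the same scalar on $V(a,k)$ computed either way, via $S(\theta)=\theta$ and $\phi(\theta)=\theta$ — forces equality. The residual work is then the routine verification that the scalar $S'(V(a,k),U)/S'(U,V(a,k))$ is well-defined and nonzero for a suitable auxiliary simple ambi-object $U$, which I would handle by the same explicit $R_0$-diagonal-sum computation.
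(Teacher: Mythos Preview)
You correctly isolate the key structural input—under $a^{8l}\neq 1$ the tensor square $V(a,k)^{\otimes 2}$ is a multiplicity-free direct sum of simples—but you then overlook the one-line conclusion and instead propose a computational detour whose decisive step is not actually justified. Multiplicity-freeness makes $\End(V^{\otimes 2})$ \emph{commutative}; the paper then invokes only the general ribbon-category identity
\[
(\Id_V\otimes\tr_V)(f)=(\tr_V\otimes\Id_V)\bigl(c_{V,V}^{-1}\,f\,c_{V,V}\bigr),
\]
and since $c_{V,V}$ lies in this commutative algebra the conjugate is $f$ itself. That is the entire proof—no eigenvalue bookkeeping, no explicit $R_0$-sums, no $\phi$ or $\theta$. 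Ironically, you come within a hair of this when you remark that it suffices to test on powers of $c_{V,V}$ (for $f=c^n$ the conjugation identity is tautological), but instead of stopping there you turn to explicit matrix computations.

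The mechanism you do propose has a genuine gap. The automorphism $\phi$ sends $V(a,k)$ to the \emph{non-isomorphic} $\Usplit$-module $V(a\e,k+1)$, so $\phi$-invariance of $\RR$ and $\theta$ compares quantities attached to different modules rather than the left and right partial traces on the same $V(a,k)$; the sentence ``eigenvalue bookkeeping \ldots\ forces equality'' is precisely where an argument is missing. Your closing paragraph about $S'(V,U)/S'(U,V)$ is also misplaced: that ratio enters the \emph{link-invariant} construction after ambidexterity is already established, and the explicit evaluation of $S'$ is the content of the next lemma, not part of this one.
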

\begin{proof}
It is easy to prove that for all $a\in \C\setminus \{a^{8l}=1\}$, and $k\in \Z$
$$V(a,k)\otimes V(a,k)\cong V(2a,2k)\oplus  V(2a,2k-2)\oplus...\oplus V(2a,2k-2l+2).$$
Since this decomposition is a direct sum of non-isomorphic simple modules the algebra  $\End(V(a,k)\otimes V(a,k))$ is commutative.   Now let $f \in\End(V \otimes V)$, then
  $(id_V\otimes \tr_V) (f)=(\tr_V\otimes id_V)(c_{V,V}^{-1}\circ f \circ c_{V,V}).$
But $c_{V,V}$ commutes with $\End(V\otimes V)$ and so
$c_{V,V}^{-1}\circ f \circ c_{V,V}=f$. Here we set $V=V(a,k)$.
This finishes the proof.
\end{proof}

The condition $a^{8l}\neq 1$ can be relaxed to $a^{4l}\neq 1$.
\begin{lemma}\label{L:S'}
Let $V(a,k)$ and $V(b,m)$ be two irreducible $\Usplit$-modules described in the previous section.  Then
\begin{equation*}S'(V(a,k),V(b,m))
=\halfroot^{(k+1-l)(m+1-l)}b^{2k-2l+2}a^{2m-2l+2}(-1)^{m+l-1}
 \left[ \frac{b^{2l} - b^{-2l}}{\halfroot^{m+1-l}b^2 - (\halfroot^{m+1-l}b^2)^{-1}}\right]
\end{equation*}
\end{lemma}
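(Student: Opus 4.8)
The plan is to compute $S'(V(a,k),V(b,m))$ directly from its definition $S'(U,V)=(\mathrm{id}_V\otimes\tr_U)(c_{U,V}c_{V,U})$ by evaluating the operator $c_{V(b,m),V(a,k)}\circ c_{V(a,k),V(b,m)}$ on the weight basis of $V(a,k)\otimes V(b,m)$ and then taking the partial quantum trace over the $V(a,k)$ factor. Recall $c_{U,V}(u\otimes v)=\tau(R(u\otimes v))$ where $R=R_0R_1$, so $c_{V,U}c_{U,V}$ acts as $\tau^2$ times the "monodromy" operator $(R_{21}R)$ up to the flip, which we can compute explicitly. The key simplification is that on a tensor product of two highest-weight modules the element $R_1=\sum_n \frac{(\e-\e^{-1})^n}{[n]!}E^n\otimes F^n$ is upper/lower triangular in the weight grading, so when we close up with the partial trace only the "diagonal" part survives in a controlled way; this is the standard mechanism by which the $S'$-matrix of quantum $\mathfrak{sl}_2$ ends up being a single ratio of quantum integers rather than a sum.

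First I would set $V=V(a,k)$ with weight basis $v_{k-2i}=F^i v_k$ ($0\le i\le l-1$) and $W=V(b,m)$ with weight basis $w_{m-2j}=F^j w_m$, and write down the action of $R_0$ on $v_{k-2i}\otimes w_{m-2j}$ using $R_0=\sum_{p,q}t^{pq}x^p P_q\otimes x^q P_p$: since $P_q v_{k-2i}=\delta_{q,k-2i}v_{k-2i}$ and $x$ acts by $a$ on $V$ and $b$ on $W$, $R_0$ acts by the scalar $t^{(k-2i)(m-2j)}a^{m-2j}b^{k-2i}$ (times the appropriate power bookkeeping from $x^p,x^q$ — I would track these carefully). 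Then I would compute $R_{21}R$, i.e. the full monodromy; because $S'$ is most easily extracted via the graphical identity, the cleanest route is: the operator whose partial trace we need is $(\mathrm{id}_W\otimes\tr_V)$ applied to $\tau^2\cdot(\text{monodromy})$, and by Lemma \ref{L:theta} and the discussion of $\tilde\theta=u\io(K)^{-1}$ the relevant combination collapses. Concretely, a standard and more economical approach is to use that $S'(U,V)$ for simple $U,V$ equals, up to the twist normalization, $\theta_V^{-1}$ times the scalar by which the double braiding $c_{V,U}c_{U,V}$ acts, closed off; and the double braiding on $V\otimes W$ acts by a scalar on each isotypic component of the decomposition of $V\otimes W$, which we can read off from the eigenvalues of the ribbon element.

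The technically delicate step — and the one I expect to be the main obstacle — is assembling the contribution of the "quantum trace" factor $\mu_V=\pi_V(\io(K)^{1-l})$ correctly together with the $\tau^2$ and the powers of $\e$, $a$, $b$, and keeping track of the grading shifts $P_i E=EP_{i-2}$ so that the exponents $(k+1-l)(m+1-l)$ on $\halfroot$, $2k-2l+2$ on $b$, $2m-2l+2$ on $a$, the sign $(-1)^{m+l-1}$, and the final bracket $\left[\frac{b^{2l}-b^{-2l}}{\halfroot^{m+1-l}b^2-(\halfroot^{m+1-l}b^2)^{-1}}\right]$ all come out exactly. I would organize this by first computing $S'$ in the "formal" $U_h(\mathfrak{g})$ normalization where the answer is a clean ratio of quantum dimensions of the summands $V(2a,2k-2i)$ appearing in Lemma \ref{L:ambi}'s decomposition (specialized to the $V\otimes W$ case), namely a telescoping sum $\sum_i (\text{eigenvalue}_i)(\text{qdim of }i\text{-th summand})$ that collapses to a single term because consecutive quantum dimensions differ by the bracket in the statement; then I would put back the $R_0$-part (the $t^{ij}x^ix^j$ Gaussian factor), which contributes precisely the monomial $\halfroot^{(k+1-l)(m+1-l)}a^{\cdots}b^{\cdots}$, and finally verify the sign by checking the identity on the case $l=1$ or by tracking $(-1)^j$ through $\varepsilon=\sum_j(-1)^jP_j$. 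I would double-check the normalization against the known special value $S'(V(a,k),V(a,k))$ (which must be consistent with $V(a,k)$ being ambidextrous, Lemma \ref{L:ambi}) and against the symmetry $S'(V^*,W^*)=S'(V,W)$ noted after \eqref{s-iden}.
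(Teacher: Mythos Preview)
Your first paragraph already contains the whole idea the paper actually uses, and the paper carries it out far more simply than you outline: since $V(b,m)$ is simple, it suffices to evaluate $S'(V(a,k),V(b,m))\cdot v_m$ on the highest weight vector $v_m$. Because $Ev_m=0$, the factor $R_1$ in $R=R_0R_1$ acts trivially on $v_m\otimes(-)$, and in $R_{21}$ the terms $F^n\otimes E^n$ with $n\ge1$ are strictly off--diagonal in the $V(a,k)$ weight basis, hence killed by the partial quantum trace. So the entire computation reduces to $(\mathrm{id}\otimes\tr_{V(a,k)})\big((\sigma(R_0)R_0)\big)\,v_m$, which is a single geometric sum over the $l$ weight vectors of $V(a,k)$; summing it gives the stated formula. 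There is no need to look at general basis vectors $v_{k-2i}\otimes w_{m-2j}$, no need for the decomposition of $V\otimes W$, and no bookkeeping beyond the $R_0$ scalars and the pivotal factor $\iota(K)^{1-l}$.

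Your ``more economical'' route in the second and third paragraphs is not just a detour---it has a genuine obstruction here. Writing $S'$ via the isotypic decomposition amounts to the identity $\mathrm{qdim}(V)\cdot S'(U,V)=\sum_i \lambda_i\,\mathrm{qdim}(W_i)$ (full quantum trace of the double braiding), and in this category every generic simple, including each summand $W_i\cong V(ab,k+m-2i)$, has $\mathrm{qdim}=0$. So both sides vanish identically and the telescoping sum you describe gives $0=0$, not the value of $S'$. The partial trace $(\mathrm{id}_V\otimes\tr_U)$ does \emph{not} split along the $W_i$ decomposition in any usable way, so the ribbon--eigenvalue method does not recover $S'$ in this zero--dimension regime. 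Stick with the highest--weight--vector computation from your first paragraph; that is exactly the paper's proof.
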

\begin{proof}
The proof is a straightforward computation. We want to
compute the action of $(id_V\otimes \tr_U)((\pi_V\otimes\pi_U)(\sigma(R)R))$
on $V$. Let us apply to the highest weight vector. Then
\[
S'((V(a,k),V(b,m))v_{m}=(id_{V(b,m)}\otimes \tr_{V(a,k)})((\pi_{V(b,m)}\otimes\pi_{V(a,k)})(\sigma(R_0)R_0))v_m
\]
\[
=\halfroot^{k(m+1-l)}b^{2k}a^{2m+2-2l}\sum_{i=0}^{l-1}\halfroot^{-2i(m+1-l)}b^{-4i} v_m
\]
Then one should sum up the geometric progression.
\end{proof}

Now, note that $S'(V(a,k),V(b,m))\neq 0$ unless $b^{4l}=1$ when $(\halfroot^{m+1-l}b^2)^2\neq 1$. In the later case the representation $V(b,m)$ is reducible. We assume that this is not the case.

Thus, we have proven the following theorem.
\begin{theorem}
All irreducible representations $V(a,k)$ of $\Usplit$
with $a^{4l}\neq 1$ are ambi-modules.
\end{theorem}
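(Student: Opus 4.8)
The plan is to derive the theorem from Lemma~\ref{L:ambi}, Lemma~\ref{L:S'}, and the propagation principle in Proposition~\ref{prop-GPT}. Lemma~\ref{L:ambi} already gives the ambidextrous property for every $V(a,k)$ with $a^{8l}\neq1$, so the only remaining task is to cover the finitely many values of $a$ with $a^{4l}\neq1$ but $a^{8l}=1$. For such an $a$ the idea is to exhibit an auxiliary simple module that is \emph{already known} to be an ambi-object and to connect it to $V(a,k)$ by a non-vanishing $S'$-pairing, so that $V(a,k)$ becomes ambidextrous through Proposition~\ref{prop-GPT}.

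Concretely, fix $V=V(a,k)$ with $a^{4l}\neq1$. Since only finitely many elements of $\C^*$ are $8l$-th roots of unity, choose $b\in\C^*$ with $b^{8l}\neq1$; then also $b^{4l}\neq1$, so $J:=V(b,0)$ exists, is simple, and by Lemma~\ref{L:ambi} is a simple ambi-object. Next I would read off Lemma~\ref{L:S'}: the pairing $S'(V,J)=S'(V(a,k),V(b,0))$ is a product of manifestly nonzero scalars (powers of $\halfroot$, $a$, $b$, and a sign) times $\bigl[(b^{2l}-b^{-2l})/(\halfroot^{1-l}b^2-(\halfroot^{1-l}b^2)^{-1})\bigr]$, whose denominator is nonzero because $V(b,0)$ is irreducible and whose numerator is nonzero because $b^{4l}\neq1$; hence $S'(V,J)\neq0$. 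By the same formula with the roles reversed, $S'(J,V)=S'(V(b,0),V(a,k))\neq0$ since $a^{4l}\neq1$ and $V(a,k)$ is irreducible. Thus $V\in A(J)=\{W\text{ simple}:S'(W,J)\neq0,\ S'(J,W)\neq0\}$, and the first part of Proposition~\ref{prop-GPT} yields that $V$ is an ambi-object.

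I do not anticipate a real obstacle: with the three cited results at hand the argument is essentially bookkeeping. The one point that deserves care is the irreducibility clause implicit in Lemma~\ref{L:S'} --- the non-vanishing of the denominator attached to the second argument $V(c,j)$, i.e. $\halfroot^{j+1-l}c^2\neq(\halfroot^{j+1-l}c^2)^{-1}$. One checks that $(\halfroot^{j+1-l}c^2)^2=1$ would force $c^{4l}=1$ (using $\halfroot^{2l}=1$, which holds since $m$ is odd); hence for $V(a,k)$ with $a^{4l}\neq1$ and for $V(b,0)$ with $b^{8l}\neq1$ these denominators are nonzero and Lemma~\ref{L:S'} applies verbatim, which is all the argument needs.
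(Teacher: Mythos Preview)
Your proposal is correct and follows exactly the paper's approach: Lemma~\ref{L:ambi} supplies a seed ambi-object, Lemma~\ref{L:S'} gives the nonvanishing of $S'$, and Proposition~\ref{prop-GPT} propagates the ambi property to all $V(a,k)$ with $a^{4l}\neq 1$. You have simply made explicit (the choice of an auxiliary $J=V(b,0)$ with $b^{8l}\neq1$, the verification that both $S'(V,J)$ and $S'(J,V)$ are nonzero, and the denominator check) what the paper leaves as a one-line remark before stating the theorem.
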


\linespread{1}

\vfill

\end{document}